\theoremstyle{plain}
   \newtheorem{theorem}{Theorem}[section]
   \newtheorem{lemma}[theorem]{Lemma}
   \newtheorem{corollary}[theorem]{Corollary}
\theoremstyle{definition}
   \newtheorem{question}{Question}
\theoremstyle{remark}
\newcommand{\RR}{\mathbb{R}}
\def\newop#1{\expandafter\def\csname #1\endcsname{\mathop{\rm
#1}\nolimits}}
\def\multiset#1#2{\ensuremath{\left(\kern-.3em\left(\genfrac{}{}{0pt}{}{#1}{#2}\right)\kern-.3em\right)}}
\newcommand{\rmnum}[1]{\romannumeral #1}
\newcommand{\Rmnum}[1]{\expandafter\@slowromancap\romannumeral #1@}
\def\newop#1{\expandafter\def\csname #1\endcsname{\mathop{\rm
#1}\nolimits}}
\title{Compatible polynomials and edgewise subdivisions}
\author{Madeleine Leander}
\address{Department of Mathematics, Stockholm University, SE-106 91
  Stockholm, Sweden}
\email{madde@math.su.se}
\begin{document}
\begin{abstract}
In this paper we answer a question posed by C. A. Athanasiadis. Namely, we prove that the local $h$-polynomial of the $r$th edgewise subdivision of the $(n-1)$-dimensional simplex $2^V$ has only real zeros. In doing so we find a tool, using compatible polynomials, that can be used to check if polynomials in certain types of families of polynomials have only real zeros. 

Corollary 2.5 of this paper was recently independently proved by P. B. Zhang \cite{Zhang}.
\end{abstract}
\maketitle

\thispagestyle{empty}

\section{Introduction}
\let\thefootnote\relax\footnote{This paper is part of my doctoral thesis \cite{ML}. }
Several polynomials of combinatorial nature are known to have only real zeros and the property of being real-rooted has lately been studied frequently within combinatorics.  
 
An important tool that has been used to prove that polynomials in certain families of polynomials are real-rooted is the technique of compatible polynomials. In \cite{ChudnovskySeymour} it was used to prove that all zeros of the independence polynomial of a claw-free graph are real. In \cite{SavageVisontai} it was used to prove that the $s$-Eulerian polynomial has only real zeros and that the type $D$ Eulerian polynomials are real-rooted. We will use this technique to prove that the local $h$-polynomial of the $r$th edgewise subdivision of the $(n-1)$-dimensional simplex $2^V$ has only real zeros. We then provide a tool that can be used to check whether polynomials in certain types of families of polynomials have only real zeros.  

Let $\Delta$ be an abstract simplicial complex of dimension $d-1$ with $f_i$ faces of dimension $i$. The $f$-vector of $\Delta$ is denoted by
$$f(\Delta) =(f_0, \ldots , f_{d-1}),$$
where $f_{-1} =1$. When studying subdivisions of $\Delta$ it is often more convenient to consider the $h$-vector of $\Delta$. The $f$-vector and the $h$-vector, $h(\Delta)=(h_0, \ldots h_d),$ uniquely determine each other through the linear relation
$$\sum_{i=0}^d f_{i-1}(x-1)^{d-i} = \sum_{i=0}^d h_i x^{d-i}.$$ 
As a further refinement, the local $h$-vector was introduced by Stanley in \cite{Stanley} as a tool to study subdivisions of simplicial complexes. 

Let $V$ be an $n$-element set. We denote by $2^V$ the simplex with vertex set $V$. 
For a positive integer $r$, taking the $r$th edgewise subdivision of a simplicial complex, $\Delta$, is a way to subdivide $\Delta$ such that every face $F\in \Delta$ is subdivided into $r^{\text{dim}(F)}$ faces of the same dimension, 
see \cite{Athanasiadis} for a further discussion. In the same paper the following question was addressed. It will be answered, to the affirmative, in Section~\ref{quest}.

\begin{question}[Athanasiadis] 
\label{QN}
Is the local $h$-polynomial of the $r$th edgewise subdivision of the $(n-1)$-dimensional simplex $2^V$, $l _V((2^V)^{\langle r \rangle},x)$, real-rooted?
\end{question}

The $n$th Eulerian polynomial may be defined as the generating polynomial for the ascent statistic over the symmetric group. 
The $r$th edgewise subdivision of the $(n-1)$-dimensional simplex $2^V$ can be expressed as the generating polynomial for the ascent statistic over a restricted version of the Smirnov words, see \cite{Athanasiadis}. Due to this result we can use some already developed techniques to show that these polynomials are indeed real-rooted. In Section \ref{gene} we generalize this method further. 

\section{Compatible polynomials and edgewise subdivisions}
\label{quest}
To  prove that the local $h$-polynomial of the $r$th edgewise subdivision of the $(n-1)$-dimensional simplex $2^V,$ denoted $l _V((2^V)^{\langle r \rangle},x),$ has only real zeros, we need combinatorial interpretations of these polynomials. 

Let $\mathcal{SW}(n,r)$ be the set of words $(w_0,w_1,\ldots , w_n) \in \{0,1, \dots r-1\}^{n+1}$ such that $w_i \ne  w_{i+1}$ for $i=0,1, \ldots , n-1,$ and $w_0=w_n=0$. The words in $\mathcal{SW}(n,r)$ are the Smirnov words with restrictions on the first and last letters, see \cite{LSW}. 
An \emph{ascent} is an index $i\in \{0, 1, \ldots , n-1\}$ such that $w_i<w_{i+1}.$
Let $\asc(w)$ denote the number of ascents in $w$.  
The polynomial $l _V((2^V)^{\langle r \rangle},x)$ can be expressed as 

$$l _V((2^V)^{\langle r \rangle},x) = \sum_{w \in \mathcal{SW}(n,r)}x^{\asc(w)},$$
see \cite{Athanasiadis}.

In order to show that $l _V((2^V)^{\langle r \rangle},x)$ has only real zeros we consider the following polynomials:
\begin{eqnarray}\label{refinement}
E_{r,n}^i = \sum_{w \in \mathcal{SW'}(n,r)}  \chi (w_n=i ) x^{\asc(w)},
\end{eqnarray}
where $\chi (\varphi )$ is one if  $\varphi$ is true and zero if not, and where $\mathcal{SW'}(n,r)$ is the set of words $(w_0,w_1,\ldots , w_n) \in \{0,1, \dots r-1\}^{n+1}$ such that $w_i \ne w_{i+1}$ for $i=0,1, \ldots , n-1$ and $w_0=0$. That is, we do not require $w_n$ to be zero. 
Note that $E_{r,n}^0$ is the polynomial of interest for this section as $E_{r,n}^0=l _V((2^V)^{\langle r \rangle},x)$.

\begin{lemma}
\label{reclemma}
Let $i,r,n$ be nonnegative integers with $n\geq 1, r \geq 2$ and $ i \leq r-1$. Then 
\begin{eqnarray}\label{recurrence}
E_{r,n}^i= \sum_{h=0}^{i-1} x E_{r,n-1}^h + \sum_{h=i+1}^{r-1}  E_{r,n-1}^h .
\end{eqnarray}
\end{lemma}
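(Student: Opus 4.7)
The plan is to establish the recurrence by a last-letter removal bijection. Given any word $w = (w_0, \ldots, w_n) \in \mathcal{SW'}(n,r)$ with $w_n = i$, its truncation $w' := (w_0, \ldots, w_{n-1})$ again begins with $0$ and has no two consecutive letters equal, so $w' \in \mathcal{SW'}(n-1, r)$. Conversely, any $w' \in \mathcal{SW'}(n-1, r)$ whose final letter $h := w'_{n-1}$ differs from $i$ extends uniquely, by appending $i$, to a word in $\mathcal{SW'}(n,r)$ ending in $i$. This gives a disjoint union decomposition of the set of words contributing to $E_{r,n}^i$, indexed by the penultimate letter $h \in \{0, 1, \ldots, r-1\} \setminus \{i\}$.

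Next I would track how the ascent statistic transforms under this bijection. The indices $0, 1, \ldots, n-2$ contribute identically to $\asc(w)$ and $\asc(w')$, so the only possible new ascent is at position $n-1$, which occurs precisely when $h = w_{n-1} < w_n = i$. Therefore $\asc(w) = \asc(w') + \chi(h < i)$, meaning each word ending in $h$ contributes $x \cdot x^{\asc(w')}$ to $E_{r,n}^i$ when $h < i$ and $x^{\asc(w')}$ when $h > i$. Summing $x^{\asc(w')}$ over all $w' \in \mathcal{SW'}(n-1,r)$ with fixed last letter $h$ produces $E_{r,n-1}^h$, and summing over the admissible values of $h$ yields exactly
$$E_{r,n}^i = \sum_{h=0}^{i-1} x\, E_{r,n-1}^h + \sum_{h=i+1}^{r-1} E_{r,n-1}^h.$$

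I do not anticipate a substantial obstacle: once the bijection is set up, the identity reduces to elementary bookkeeping. The only care needed is that the hypothesis $n \geq 1$ ensures the truncation is meaningful, and that the boundary cases $i = 0$ and $i = r-1$ correctly produce, respectively, an empty first sum and an empty second sum, consistent with the fact that no letter $h$ in $\{0, \ldots, r-1\}$ can satisfy $h < 0$ or $h > r-1$.
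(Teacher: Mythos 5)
Your proposal is correct and follows essentially the same argument as the paper: both remove the last letter $w_n=i$, split according to whether the penultimate letter $h$ is less than or greater than $i$, and observe that the ascent at position $n-1$ is present exactly when $h<i$, contributing the factor $x$. Your version merely makes the inverse map (appending $i$) and the boundary cases $i=0$, $i=r-1$ explicit, which the paper leaves implicit.
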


\begin{proof}
First we split $E_{r,n}^i$ into two parts depending on if $w_{n-1}$ is less than or greater than $i$:
\begin{eqnarray*}
E_{r,n}^i&=&\sum_{w \in \mathcal{SW'}(n,r)}  \chi (w_n=i ) x^{\asc(w)} \\
&=&\sum_{\substack{w \in \mathcal{SW'}(n,r) \\ w_{n-1}<i}}  \chi (w_n=i ) x^{\asc(w)} + 
\sum_{\substack{w \in \mathcal{SW'}(n,r) \\ w_{n-1}>i}}  \chi (w_n=i ) x^{\asc(w)}.
\end{eqnarray*}
Let us denote the first sum of the last line by $S_1$ and the second by $S_2$.
If $w_{n-1}=h<i$ we have an ascent at $n-1$. If we remove $w_n$ the resulting word ends with $h$ and is of size one less. 
By doing so we also remove the ascent $n-1$. Hence $\sum_{h=0}^{i-1} x E_{r,n-1}^h =S_1$.  
Now, if $w_{n-1}=h>i$ and we remove $w_n$ from the word $w$, no ascent is affected and thus $S_2=\sum_{h=i+1}^{r-1}  E_{r,n-1}^h.$
\end{proof}

To prove Question \ref{QN} we will study compatibility of the polynomials $E_{r,n}^i$. 
Let $f_1(x), \ldots , f_k(x) $ be polynomials with real coefficients. Then they are said to be \emph{compatible} if, for all real and positive $c_1,\ldots , c_k,$ the polynomial 
$$ \sum_{j=1}^k c_jf_j(x)
$$
has only real zeros. The polynomials are said to be pairwise compatible if, for all $i,j\in \{1,\ldots ,k\}$, $f_i(x)$ and $f_j(x)$ are compatible. Chudnovsky-Seymour proved the following.

\begin{lemma}[Chudnovsky-Seymour \cite{ChudnovskySeymour}, 2.2]
\label{pairwise}
If the polynomials $f_1(x), \ldots , f_n(x)$ have positive leading coefficients, then they are pairwise compatible if and only if they are compatible.
\end{lemma}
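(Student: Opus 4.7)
My plan is to route the proof through the theory of interlacing of real-rooted polynomials, which provides a geometric handle on compatibility. The forward direction (compatibility $\Rightarrow$ pairwise compatibility) is routine: given two indices $i\neq j$ and $c_i,c_j>0$, set $c_k=\epsilon$ for every $k\notin\{i,j\}$. By hypothesis $\sum_k c_kf_k$ is real-rooted for each $\epsilon>0$; since all $f_k$ have positive leading coefficient the degree does not drop as $\epsilon\to 0$, and Hurwitz's theorem on the continuity of zeros forces the limit $c_if_i+c_jf_j$ to be real-rooted.

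For the reverse direction, the plan is to reformulate compatibility in terms of common interlacers. Call a real-rooted polynomial $p$ a \emph{common interlacer} of $f_1,\ldots,f_n$ if the roots of $p$ weakly separate the roots of each $f_i$. The intermediate step I would prove first is the two-polynomial characterization: two polynomials with positive leading coefficients are compatible if and only if they admit a common interlacer. The ``$\Leftarrow$'' half is a direct sign-change argument at consecutive roots of $p$. The ``$\Rightarrow$'' half is where I expect most of the technical work to sit, since one must construct $p$ directly from the interlocking pattern of real roots of the two polynomials, using the compatibility hypothesis to show this pattern is sufficiently constrained.

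Once the two-polynomial characterization is established, the passage from pairwise to simultaneous is a root-comparison argument. Assuming the $f_j$ all have degree $d$ and writing their roots in increasing order as $\alpha_1^{(j)}\le\cdots\le\alpha_d^{(j)}$, existence of a pairwise common interlacer for $(f_j,f_k)$ translates into $\alpha_i^{(j)}\le\alpha_{i+1}^{(k)}$ for all $i$ and all ordered pairs $(j,k)$. These inequalities, combined over all pairs at once, let one choose $\beta_i\in[\max_j\alpha_i^{(j)},\min_j\alpha_{i+1}^{(j)}]$ and thus produce a single polynomial $p$ with roots $\beta_1,\ldots,\beta_{d-1}$ that interlaces every $f_j$. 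This $p$ then interlaces $\sum_i c_if_i$ for every choice of positive $c_i$ by the same sign-change argument, so the combination is real-rooted and $f_1,\ldots,f_n$ are compatible.
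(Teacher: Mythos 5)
Note first that the paper does not prove this lemma at all: it is imported verbatim from Chudnovsky--Seymour, so your attempt has to be measured against their original argument rather than anything in this paper. Your plan in fact reconstructs the Chudnovsky--Seymour strategy: the easy direction by specializing the coefficients and letting the extra ones tend to $0$; the reduction of compatibility of a pair to the existence of a common interlacer; the Helly-type passage from pairwise common interlacers to a single simultaneous one via the inequalities $\alpha_i^{(j)}\le\alpha_{i+1}^{(k)}$ (these interval intersections are exactly how the original proof glues the pairwise information together); and the final sign-change argument showing the common interlacer interlaces $\sum_i c_i f_i$. That skeleton is sound. One small misstatement in the forward direction: the degree of $c_if_i+c_jf_j+\epsilon\sum_{k\notin\{i,j\}}f_k$ can exceed that of $c_if_i+c_jf_j$ when some $f_k$ has larger degree, so the degree \emph{can} drop in the limit; this is harmless, since Hurwitz still yields real-rootedness of any nonzero limit, and $c_if_i+c_jf_j$ has positive leading coefficient.

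There are, however, two genuine gaps. First, the crux of the whole lemma is precisely the step you defer: that two \emph{compatible} polynomials with positive leading coefficients admit a common interlacer. Everything downstream (the root inequalities, the choice of $\beta_i$, the final sign-change argument) is routine once this is granted, and this implication is where Chudnovsky and Seymour do their real work, analyzing how the roots of $f+cg$ move as $c$ ranges over $(0,\infty)$ in order to extract the interlacer. A proposal that black-boxes this step with ``this is where I expect most of the technical work to sit'' has not proved the lemma. Second, ``assuming the $f_j$ all have degree $d$'' is not a harmless normalization: pairwise compatible families can mix degrees. For instance $f_1=x$ and $f_2=x^2-1$ are compatible, since $ax+b(x^2-1)$ has discriminant $a^2+4b^2>0$ for all $a,b>0$, yet your root-indexing scheme does not apply to them. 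One must first show that pairwise compatibility forces all degrees into $\{d-1,d\}$ (if $\deg f-\deg g\ge 2$, the escaping roots of $cf+g$ as $c\to 0^+$ are asymptotically $k$th roots of a negative real number with $k\ge 2$, hence some are non-real), and then redo the interlacer bookkeeping with root lists of lengths $d$ and $d-1$, which shifts the indexing in your inequalities. Both gaps are repairable---your route is essentially the correct and indeed the original one---but as written the argument is incomplete exactly at its central step.
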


\begin{theorem}
\label{comp}
Let $f_1, \ldots ,f_n \in \mathbb{R}[x]$ be a sequence of polynomials with positive leading coefficients such that 
\begin{itemize}
\item[(a)] $f_i(x)$ and $f_j(x)$ are compatible, and
\item[(b)] $xf_i(x)$ and $f_j(x)$ are compatible
\end{itemize}
for all $1\leq i \leq j \leq n.$

Define a set of polynomials $g_1, \ldots , g_n \in \mathbb{R}[x] $ by 
$$
g_k(x) = \sum_{h=1}^{i-1} x f_h(x) + \sum_{h=i+1}^n f_h(x). 
$$
Then 
\begin{itemize}
\item[(a')] $g_i(x)$ and $g_j(x)$ are compatible, and
\item[(b')] $xg_i(x)$ and $g_j(x)$ are compatible
\end{itemize}
for all $1\leq i \leq j \leq n.$
\end{theorem}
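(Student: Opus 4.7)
The plan is to use Lemma~\ref{pairwise} to reduce conditions (a') and (b') to verifying that the positive combinations $\alpha g_i + \beta g_j$ and $\alpha x g_i + \beta g_j$ (for $i \le j$ and $\alpha, \beta > 0$) have only real zeros. Introducing the partial sums $A_k = f_1 + \cdots + f_k$ and $B_k = f_{k+1} + \cdots + f_n$, one checks that $g_k = x A_{k-1} + B_k$, so
\[
\alpha g_i + \beta g_j = x\bigl(\alpha A_{i-1} + \beta A_{j-1}\bigr) + \bigl(\alpha B_i + \beta B_j\bigr),
\]
and expanding in terms of the $f_h$'s presents this as a nonnegative combination of the family $\{x f_h : h < j\} \cup \{f_h : h > i\}$, with the summand $(\alpha + \beta x) f_h$ appearing for each index $i < h < j$. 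An analogous expansion of $\alpha x g_i + \beta g_j$ yields a nonnegative combination involving $x^2 f_h$, $x f_h$, and $f_h$ terms.

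Within these families, $(f_h, f_k)$ is compatible by (a); $(x f_h, x f_k)$ is compatible because multiplying two compatible polynomials with positive leading coefficients by $x$ preserves compatibility; and the cross pairs $(x f_h, f_k)$ with $h \le k$ are compatible by (b). If these were all the pairs present, Lemma~\ref{pairwise} would conclude the argument immediately.

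The main obstacle is precisely the pairs $(x f_h, f_k)$ with $i < k < h < j$, which are not covered by (b) and in general fail to be pairwise compatible. The plan is to handle these by induction on the index gap $j - i$. The base cases $j = i$ and $j = i+1$ are immediate, since no such problematic pair arises; in particular, the family $\{x f_h : h < j\} \cup \{f_h : h > i\}$ is then pairwise compatible and Lemma~\ref{pairwise} applies directly. For the inductive step $j - i \ge 2$, the idea is to decompose $\alpha g_i + \beta g_j$ by splitting off a term of the form $(\alpha + \beta x) f_k$ for an intermediate index $i < k < j$ and rewriting the remainder as a nonnegative combination of $g$-pairs whose index gap is strictly smaller than $j - i$, to which the inductive hypothesis applies. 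A parallel induction on $j - i$, using the same type of decomposition after accounting for the extra factor of $x$ coming from $x g_i$, takes care of (b').
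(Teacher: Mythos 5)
You correctly locate the crux: after expanding $\alpha g_i+\beta g_j$ over the $f_h$'s, the cross pairs $(xf_h,f_k)$ with $i<k<h<j$ are not covered by hypothesis (b). But your inductive repair has a genuine gap, in two places. First, the proposed decomposition does not exist in general: take $n=3$, $i=1$, $j=3$, $k=2$, so that $g_1=f_2+f_3$, $g_2=xf_1+f_3$, $g_3=xf_1+xf_2$, and
$$\alpha g_1+\beta g_3=(\alpha+\beta x)f_2+\beta xf_1+\alpha f_3.$$
The remainder $\beta xf_1+\alpha f_3$ can be written as a nonnegative combination $c_1g_1+c_2g_2+c_3g_3$ only when $\alpha=\beta$: matching the coefficients of $f_2$ and $xf_2$ forces $c_1=c_3=0$, and then $g_2$ ties the coefficients of $xf_1$ and $f_3$ together. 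Second, and more fundamentally, even if such a rewriting existed, real-rootedness is not additive: knowing that $(\alpha+\beta x)f_k$ and the remainder are each real-rooted says nothing about their sum. To conclude you would need the split-off term to be \emph{compatible} with the remainder, which is exactly the difficulty you set out to resolve, and your induction hypothesis (compatibility of $g$-pairs) is not strong enough to supply it.

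The missing idea, which is how the paper proceeds, is that the problematic pairs never need to be compatible in isolation, because in $\alpha g_i+\beta g_j$ their coefficients are correlated: for each intermediate index $i<h<j$ the two terms enter as the single bundle $(\alpha+\beta x)f_h$, with the \emph{same} $\alpha,\beta$ for every $h$. So fix $\alpha,\beta>0$ and apply Lemma~\ref{pairwise} to the list $\{xf_a: a\leq i\}\cup\{(\alpha+\beta x)f_h: i<h<j\}\cup\{f_c: c\geq j\}$. Two bundles are compatible because $c(\alpha+\beta x)f_h+d(\alpha+\beta x)f_k=(\alpha+\beta x)(cf_h+df_k)$ and (a) applies; a bundle is compatible with $xf_a$ (resp.\ $f_c$) by applying Lemma~\ref{pairwise} to the triple $xf_a,\,f_h,\,xf_h$ (resp.\ $f_h,\,xf_h,\,f_c$), which is pairwise compatible by (a) and (b), so that every conic combination, in particular $c\,xf_a+d\alpha f_h+d\beta xf_h$, is real-rooted. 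A final application of Lemma~\ref{pairwise} to the bundled list gives (a'), and the analogous bundling of $\alpha xg_i+\beta g_j$ (with factors $(\alpha x+\beta)$) gives (b'). Your expansion and base cases are fine; this grouping observation is what replaces your induction.
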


The theorem above is proved in a similar manner as that of Theorem 2.3 in \cite{SavageVisontai}.

\begin{proof}
First we show (a'), that $c_ig_i(x) + c_jg_j(x)$ has only real zeros for all $c_i,c_j \geq 0.$ By the definition of $g_i$ and $g_j$ we have
\begin{eqnarray*}
c_ig_i(x) + c_jg_j(x) &=&  \sum_{\alpha=1}^{i-1} (c_i+c_j)x f_{\alpha}(x) + c_jxf_i + \sum_{\beta=i+1}^{j-1} (c_i + c_jx)f_{\beta}(x) \\
&+& c_if_j + \sum_{\gamma=j+1}^{n} (c_i + c_j)f_{\gamma}(x).
\end{eqnarray*}
Now, by Lemma \ref{pairwise} it suffices to prove that any two polynomials from any of the three sets
\begin{itemize}
\item[$A=$] $\{xf_{\alpha}(x) : 0\leq \alpha \leq i\},$
\item[$B=$] $ \{ (c_i + c_jx)f_{\beta}(x) : i<\beta < j \}$, and
\item[$C=$] $ \{ f_{\gamma}(x) : j \leq \gamma \leq n \}$,
\end{itemize}
are pairwise compatible. 
By (a) two polynomials in $A$ are compatible. The same is true for two polynomials in $B$ or $C$. By (b) any polynomial in $A$ is compatible with any polynomial in $C$.  
To prove that a polynomial in $A$ is compatible with one in $B$ we will consider the two sets $B_1=\{ c_i f_{\beta}(x) \}$ and $B_2=\{c_jx f_{\beta}(x) \}.$

Now, any two polynomials from $A,B_1$ and $B_2$ are pairwise compatible by (a) and (b), and by Lemma \ref{pairwise} any three polynomials from $A,B_1$ and $B_2$ are compatible. Hence polynomials in $A$ and $B$ are compatible since conic combinations of three polynomials, one from $A,B_1$ and $B_2$, are real-rooted. To prove that a polynomial in $B$ is compatible with one in $C$ we use an analogous argument utilizing the fact that polynomials $ f_{\beta}(x), xf_{\beta}(x)$ and $ f_{\gamma}(x)$ are pairwise compatible.

It remains to show (b'), that $c_ixg_i(x) + c_jg_j(x)$ has only real zeros for all $c_i,c_j \geq 0.$ It is done similarly to the proof of (a'). First, by definition of $g_i$ and $g_j$ we have
\begin{eqnarray*}
c_ixg_i(x) + c_jg_j(x) &=&  \sum_{\alpha=1}^{i-1} (c_ix+c_j)x f_{\alpha}(x) + c_jxf_i + \sum_{\beta=i+1}^{j-1} (c_i + c_j)xf_{\beta}(x)\\
&+& c_ixf_j + \sum_{\gamma=j+1}^{n} (c_ix + c_j)f_{\gamma}(x).
\end{eqnarray*}
By Lemma \ref{pairwise} it suffices to prove that the polynomials in the three sets 
\begin{itemize}
\item[$A'=$] $\{x(c_ix+c_j)f_{\alpha}(x) : 0\leq \alpha < i\},$
\item[$B'=$] $ \{xf_{\beta}(x) : i\leq \beta \leq j\}$,  and
\item[$C'=$] $\{(c_ix+c_j)f_{\gamma}(x) : j < \gamma \leq n,$
\end{itemize}
are pairwise compatible. 
Again, two polynomials from the same set $A',B'$ or $C'$ are compatible by (a). Polynomials in $A'$ and $B'$ are compatible since $x^2f_{\alpha}(x), xf_{\alpha}(x)$ and $x^2f_i(x)$ are pairwise compatible by (a) and (b). 
Polynomials from $A'$ and $C'$ are compatible by (b). Finally $xf_{\beta}(x),  xf_{\gamma}(x)$ and $ f_{\gamma}(x)$ are compatible by (a) and (b) which shows compatibility for polynomials in $B'$ and $C'$.
\end{proof}

We are now ready to  answer to Question \ref{QN}. First we prove real-rootedness for the polynomials $E_{r,n}^i$.

\begin{theorem}
For nonnegative integers $i,r,n$  with $n\geq 1, r \geq 2$ and $ 0 \leq i \leq r-1$ the polynomials $E_{r,n}^i$   
has only real zeros. 
\end{theorem}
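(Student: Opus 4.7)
The plan is to prove by induction on $n$ the stronger statement that the entire family $E_{r,n}^0, E_{r,n}^1, \ldots, E_{r,n}^{r-1}$ satisfies both conditions (a) and (b) of Theorem \ref{comp}. Real-rootedness of each individual $E_{r,n}^i$ then falls out, because in any compatible family each member inherits real zeros: setting $c_i = 1$ and letting $c_j \to 0^+$ for $j \neq i$ in the defining conic combination produces a sequence of real-rooted polynomials whose bounded zeros converge to those of $f_i$.

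For the base case $n = 1$, direct enumeration of $\mathcal{SW}'(1,r)$ gives $E_{r,1}^0 = 0$ and $E_{r,1}^h = x$ for every $1 \leq h \leq r-1$, so the nonzero members of the family are all equal to $x$ and properties (a), (b) hold trivially. For the inductive step, Lemma \ref{reclemma} exhibits $E_{r,n}^i$ in exactly the shape of the polynomial $g_{i+1}$ in Theorem \ref{comp}, after the reindexing $f_h := E_{r,n-1}^{h-1}$ so that indices run from $1$ to $r$ rather than $0$ to $r-1$. Applying Theorem \ref{comp} to the family at level $n-1$, which by the inductive hypothesis satisfies (a) and (b), thus yields (a) and (b) for the family at level $n$.

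The step I expect to be the main obstacle is reconciling the positive leading coefficient hypothesis of Theorem \ref{comp} with the presence of zero polynomials in the family, such as $E_{r,1}^0 = 0$ (and, when $r = 2$, the zero polynomials appearing at every level). A clean remedy is to delete such zero members before invoking Theorem \ref{comp}: a vanishing $E_{r,n-1}^h$ contributes nothing to any $g_k$, whether multiplied by $x$ or not, so its removal leaves every $E_{r,n}^i$ intact, and the surviving polynomials, as generating polynomials for the ascent statistic over nonempty sets of words, have positive leading coefficients. The degenerate case $r = 2$ can alternatively be disposed of by inspection, since at each level only one $E_{2,n}^i$ is nonzero and it equals a monomial $x^k$, hence trivially real-rooted.
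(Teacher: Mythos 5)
Your proposal follows the paper's proof essentially verbatim: the paper also inducts on $n$ with the stronger hypothesis that the sequence $\{E_{r,n}^i\}_{i=0}^{r-1}$ satisfies (a) and (b) of Theorem \ref{comp}, takes $n=1$ as the base case, and carries out the inductive step by feeding the recurrence of Lemma \ref{reclemma} into Theorem \ref{comp}. Two small remarks on your added details: the limiting argument for extracting real-rootedness of each member is valid but unnecessary, since condition (a) with $i=j$ already says $(c_1+c_2)f_i$ is real-rooted; and your base-case computation ($E_{r,1}^0=0$, $E_{r,1}^h=x$ for $h\ge 1$) and reindexing $f_h:=E_{r,n-1}^{h-1}$ are correct.

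The one place you go beyond the paper is the zero-polynomial issue, and you are right to flag it: the paper says only that the base case ``clearly'' holds and never reconciles $E_{r,1}^0=0$ with the positive-leading-coefficient hypothesis of Theorem \ref{comp}. But your deletion remedy, as stated, has a defect. If you delete the zero member $E_{r,n-1}^0$ and apply Theorem \ref{comp} to the shortened list $f'_1,\dots,f'_{r-1}$ with $f'_h=E_{r,n-1}^h$, the theorem returns only the $r-1$ outputs $g'_k=\sum_{h<k}xf'_h+\sum_{h>k}f'_h$, which are $E_{r,n}^1,\dots,E_{r,n}^{r-1}$; the remaining polynomial $E_{r,n}^0=\sum_{h\ge 1}E_{r,n-1}^h$ is the analogue of a ``$g_0$'' with empty $x$-part and is \emph{not} among them. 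So the citation delivers none of the compatibility statements (a), (b) involving $E_{r,n}^0$ --- yet exactly these are needed to run the next inductive step, and $E^0_{r,n}$ is moreover the polynomial of ultimate interest. Two repairs are available. Either observe that Theorem \ref{comp} remains valid verbatim when some $f_h$ are identically zero and the rest have positive leading coefficients, since in its proof every conic combination simply loses the vanishing summands before Lemma \ref{pairwise} is invoked; or note that for $r\ge 3$ the zero occurs only at level $n=1$, where all surviving inputs equal $x$, so the missing compatibilities at level $n=2$ (between $E_{r,2}^0=(r-1)x$ and $E_{r,2}^k=(k-1)x^2+(r-1-k)x$) reduce to the one-line check that the relevant conic combinations are $x$ times a linear polynomial with nonnegative coefficients. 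With either repair your argument is complete, and your treatment of $r=2$ by inspection is fine.
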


\begin{proof}
We use induction on $n$ and prove the stronger statement, that for each $n$ and $r$, the sequence $\{E_{r,n}^i\}_{i=0}^{r-1}$ satisfies (a) and (b) in Theorem \ref{comp}. Clearly the statement holds for the base case $n=1.$ Now assume $n \geq 2$.
By Lemma \ref{reclemma} we have the recurrence
\begin{eqnarray}
E_{r,n}^i= \sum_{h=0}^{i-1} x E_{r,n-1}^h + \sum_{h=i+1}^{r-1}  E_{r,n-1}^h 
\end{eqnarray}
and thus the induction step holds by Theorem \ref{comp}.
In particular $E_{r,n}î$ has only real zeros for $n\geq 1$. 
\end{proof}

For the special case of $i=0$ we get the following corollary and the answer to Question \ref{QN}. 

\begin{corollary}
The local $h$-polynomial of the $r$th edgewise subdivision of the $(n-1)$-dimensional simplex $2^V$, $l _V((2^V)^{\langle r \rangle},x),$ has only real zeros. 
\end{corollary}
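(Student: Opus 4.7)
The plan is straightforward: this corollary is essentially a specialization of the preceding theorem to $i=0$. Recall from the discussion following equation~\eqref{refinement} that $E_{r,n}^0 = l_V((2^V)^{\langle r\rangle}, x)$. To see this explicitly, note that $\mathcal{SW}'(n,r)$ requires $w_0 = 0$ but imposes no condition on $w_n$; multiplying by the indicator $\chi(w_n = 0)$ reintroduces the final-letter condition and recovers precisely the set $\mathcal{SW}(n,r)$ over which $l_V((2^V)^{\langle r\rangle}, x)$ is defined as an ascent-generating polynomial. Thus
$$E_{r,n}^0 = \sum_{w \in \mathcal{SW}'(n,r)} \chi(w_n = 0)\, x^{\asc(w)} = \sum_{w \in \mathcal{SW}(n,r)} x^{\asc(w)} = l_V((2^V)^{\langle r\rangle}, x).$$

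Once this identification is in hand, the corollary is immediate by invoking the preceding theorem with $i = 0$, which asserts that $E_{r,n}^i$ has only real zeros for every $0 \leq i \leq r-1$. There is no genuine obstacle in this last step; the real content has already been delivered by Lemma~\ref{reclemma}, which supplies the recursion, and by Theorem~\ref{comp}, whose closure properties for the compatibility conditions (a) and (b) are exactly what allow the induction on $n$ to push through for the full family $\{E_{r,n}^i\}_{i=0}^{r-1}$ simultaneously. The corollary simply harvests the $i=0$ component of that result.
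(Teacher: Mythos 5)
Your proposal is correct and matches the paper's own route exactly: the paper derives the corollary as the special case $i=0$ of the theorem on the real-rootedness of the polynomials $E_{r,n}^i$, using the identification $E_{r,n}^0 = l_V((2^V)^{\langle r\rangle},x)$ already noted after equation~(\ref{refinement}). Your explicit verification that the indicator $\chi(w_n=0)$ cuts $\mathcal{SW'}(n,r)$ down to $\mathcal{SW}(n,r)$ is a harmless elaboration of that same identification.
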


\section{A generalization}
\label{gene}

Let $f$ and $g$ be two real-rooted polynomials in $\RR[x]$ with positive leading coefficients. Let further $\alpha_1 \geq \cdots \geq \alpha_n$ and $\beta_1 \geq \cdots \geq \beta_m$ be the zeros of $f$ and $g$, respectively. If 
$$\cdots \leq \alpha_2 \leq \beta_2 \leq \alpha_1 \leq \beta_1$$
we say that $f$ is an \emph{interleaver} of $g$ and we write $f \ll g$. We also let $f\ll 0$ and $0\ll f$.

We call a sequence $F_n=(f_i)^n_{i=1}$ of real-rooted polynomials \emph{interlacing} if $f_i \ll f_j$ for all $1\leq i < j \leq n$. We denote by $\mathcal{F}_n $ the family of all interlacing sequences $(f_i)_{i=1}^n$ of polynomials and we let $\mathcal{F}^+_n$ be the family of $(f_i)_{i=1}^n \in \mathcal{F}_n $ such that $f_i$ has nonnegative coefficients for all $1\leq i \leq n$. 
We are interested in when an $m \times n$-matrix $G=(G_{i,j}(x))$ maps $ \mathcal{F}^+_n$ to $\mathcal{F}^+_m $ by $G\cdot F_n = (g_1,\ldots , g_m)^T$, where $g_k=\sum^n_{i=1} G_{ki}f_i.$ This problem was considered in \cite{HEC,F,SavageVisontai} and in \cite{HEC} the following theorem was proven.

\begin{theorem}[\cite{HEC}]
Let $G=(G_{i,j}(x))$ be an $m \times n$ matrix of polynomials. Then $G : \mathcal{F}^+_n \rightarrow \mathcal{F}^+_m $ if and only if 
\begin{itemize}
\item[1.] The coefficients of $G_{ij} $ are nonnegative for all $i\in [m] $ and $j \in [n]$ , and
\item[2.] for all $\lambda, \mu >0, 1 \leq i<j \leq n$ and $1\leq k<l\leq m$
\end{itemize}
\begin{eqnarray}
\label{ll}
(\lambda x + \mu)G_{kj}(x) + G_{lj}(x) \ll (\lambda x + \mu)G_{ki}(x) + G_{li}(x)
\end{eqnarray}
\end{theorem}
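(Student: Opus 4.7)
My plan is to prove both implications, with necessity following from test-sequence arguments and sufficiency handled by induction on $n$.

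For necessity, I would extract the two conditions by evaluating $G$ on well-chosen inputs in $\mathcal{F}_n^+$. Condition (1) follows from the simplest test: the sequence with $f_j=1$ and $f_a=0$ for $a\neq j$ lies in $\mathcal{F}_n^+$, and $G$ applied to it is the $j$-th column $(G_{1j},\ldots,G_{mj})^T$; this must lie in $\mathcal{F}_m^+$, forcing nonnegative coefficients on every $G_{kj}$ (and also column-wise interlacing, which will be useful later). To extract condition (2), I would use two-parameter families of test sequences supported on positions $i<j$, with entries chosen of the form constant and linear factor $\lambda x+\mu$ (so that $f_i \ll f_j$ is automatic), apply $G$, and then use the standard characterization of the $\ll$-relation under positive linear combinations to read off the required inequality.

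For sufficiency, I would induct on $n$. The base case $n=2$ is the heart of the argument: given $f_1 \ll f_2$ in $\mathcal{F}_2^+$, the goal is $(g_1,\ldots,g_m)\in\mathcal{F}_m^+$, where $g_k=G_{k1}f_1+G_{k2}f_2$. The key step is to reduce $g_k \ll g_l$ for $k<l$ to real-rootedness of the combinations $(\lambda x+\mu)g_k+g_l$ for all $\lambda,\mu>0$. Rewriting
$$(\lambda x+\mu)g_k+g_l=\bigl[(\lambda x+\mu)G_{k1}+G_{l1}\bigr]f_1+\bigl[(\lambda x+\mu)G_{k2}+G_{l2}\bigr]f_2,$$
condition (2) asserts exactly that the pair of coefficient polynomials (of $f_1$ and $f_2$) is itself interlacing; then the classical fact that a linear combination of an interlacing pair of polynomials whose coefficients are themselves interlacing is real-rooted finishes the base case. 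The inductive step reduces a length-$n$ input to shorter ones by telescoping: any interlacing sequence of length $n$ can be decomposed into nested interlacing pairs in a way that preserves both (1) and (2).

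The main obstacle is the characterization step in the base case: ensuring that condition (2) yields the full $\ll$-relation between $g_k$ and $g_l$ and not merely real-rootedness of their linear combinations. This will require using condition (2) at limiting values $\lambda\to 0$ and $\mu\to 0$, which specialize to $G_{lj}\ll G_{li}$ and $xG_{kj}\ll xG_{ki}$ respectively, and combining these with the interior conditions to control leading coefficients and degree patterns needed for the interlacing direction of $g_k$ and $g_l$.
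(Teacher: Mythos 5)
The paper itself gives no proof of this statement: it is quoted verbatim from \cite{HEC}, so your outline can only be compared with the standard argument in that source, which it largely parallels. Your base case is the right engine and matches the standard proof: rewriting $(\lambda x+\mu)g_k+g_l=Pf_1+Qf_2$ with $P=(\lambda x+\mu)G_{k1}+G_{l1}$ and $Q=(\lambda x+\mu)G_{k2}+G_{l2}$, condition (2) gives precisely $Q\ll P$ (note the reversal: the coefficient of $f_2$ interlaces the coefficient of $f_1$, which is exactly the hypothesis of the classical lemma you invoke, so be careful to state it in the reversed form). Two caveats. First, everything hinges on the two-way equivalence, for polynomials $p,q$ with nonnegative coefficients, that $p\ll q$ holds if and only if $(\lambda x+\mu)p+q$ is real-rooted for all $\lambda,\mu>0$; you use the forward direction to define the goal and the backward direction to conclude $g_k\ll g_l$, and your sketch of the backward direction is inadequate: the limits $\lambda\to 0$, $\mu\to 0$ of condition (2) only yield relations among the \emph{entries} of $G$ (such as $G_{lj}\ll G_{li}$), not the interlacing of $g_k$ and $g_l$. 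You must isolate and prove (or cite) this lemma; it is the crux. Second, your necessity argument with test pairs $(1,\lambda x+\mu)$ in positions $i<j$ only directly yields $(\lambda x+\mu)G_{kj}+G_{ki}\ll(\lambda x+\mu)G_{lj}+G_{li}$, which mixes rows and columns differently from (2); to recover (2) you need the algebraic rearrangement $(\lambda' x+\mu')\bigl[(\lambda x+\mu)G_{kj}+G_{ki}\bigr]+(\lambda x+\mu)G_{lj}+G_{li}=(\lambda x+\mu)\bigl[(\lambda' x+\mu')G_{kj}+G_{lj}\bigr]+(\lambda' x+\mu')G_{ki}+G_{li}$ combined with both directions of the same equivalence lemma. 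This works, but it is a nontrivial step you have not written down.

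The genuine gap is the inductive step. The proposed ``telescoping'' of a length-$n$ interlacing sequence into nested interlacing pairs cannot reduce the general case to $n=2$: the output $g_k=\sum_{i=1}^n G_{ki}f_i$ is not assembled from two-column applications of $G$, because merging columns $i$ and $j$ would require replacing $f_i,f_j$ by a single input polynomial that depends on the row index $k$. As stated, this step would fail. The repair is that no induction on $n$ is needed at all: your base-case computation works verbatim for every $n$, since $(\lambda x+\mu)g_k+g_l=\sum_{i=1}^n h_if_i$ with $h_i=(\lambda x+\mu)G_{ki}+G_{li}$, and condition (2) says exactly that $h_j\ll h_i$ for all $i<j$. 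What you need instead is the standard $n$-term version of your classical fact: if $(f_1,\ldots,f_n)\in\mathcal{F}_n^+$ and $h_n\ll\cdots\ll h_1$ all have nonnegative coefficients, then $\sum_i h_if_i$ is real-rooted. Substituting this lemma for the telescoping closes the proof. Finally, note that with the strict range $k<l$ as quoted, the sufficiency direction is degenerate for $m=1$: the $1\times 2$ matrix with entries $1$ and $x^2+1$ satisfies (1) and, vacuously, (2), yet sends the interlacing pair $(1,1)$ to $x^2+2$; in \cite{HEC} the condition runs over $k\le l$, and your write-up should adopt that range (or treat $m=1$ separately), since your argument as proposed quietly inherits the same hole.
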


\begin{corollary}[\cite{HEC}]
\label{posdet}
Let $G=(G_{ij}) $ be an $m \times n$ matrix over the nonnegative real numbers. Then $G : \mathcal{F}^+_n \rightarrow \mathcal{F}^+_m $ if and only if the determinants of all $2\times 2$ sub-matrices are nonnegative.
\end{corollary}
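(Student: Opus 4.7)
The plan is to specialize the theorem immediately above to the case where every $G_{ij}$ is a nonnegative real constant, and show that conditions (1) and (2) collapse to the single requirement that every $2\times 2$ minor of $G$ be nonnegative.

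First, condition (1) is automatic since each $G_{ij}\in\RR_{\geq 0}$, so only condition (2) needs to be analyzed. Fix indices $i<j$ and $k<l$ and parameters $\lambda,\mu>0$. In the constant setting, both sides of the interlacing relation \eqref{ll} are polynomials in $x$ of degree at most one, namely
$$\lambda G_{kj}\,x + (\mu G_{kj}+G_{lj}) \quad \text{and} \quad \lambda G_{ki}\,x + (\mu G_{ki}+G_{li}).$$
In the generic case where both leading coefficients are strictly positive (i.e.\ $G_{ki},G_{kj}>0$), each is a linear polynomial with a unique real root, and the interlacing relation reduces to a comparison of these two roots. A short cross-multiplication cancels $\lambda$ and $\mu$ and yields exactly
$$G_{ki}G_{lj}-G_{kj}G_{li}\;\geq\;0,$$
which is precisely the $2\times 2$ minor of $G$ on rows $k,l$ and columns $i,j$. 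In particular, the $\lambda,\mu$-dependence drops out cleanly, so condition (2) for \emph{all} positive $\lambda,\mu$ is equivalent to this one inequality in the generic case.

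The remaining step, which I expect to be the only delicate point, is handling the degenerate cases where one or more of $G_{ki},G_{kj},G_{li},G_{lj}$ vanishes, so that one of the two linear forms collapses to a constant (possibly zero). I would dispatch these case by case using the conventions $f\ll 0$ and $0\ll f$ recorded in the paper: for instance, if $G_{kj}=0$ then the left-hand linear form reduces to the constant $G_{lj}$, and the minor becomes $G_{ki}G_{lj}$, which is manifestly $\geq 0$ and matches the interlacing, which holds automatically. Running through the finite list of such boundary configurations and checking that the interlacing and the minor inequality are simultaneously satisfied (or simultaneously vacuous) completes the reduction, and conditions (1)–(2) of the theorem become equivalent to the nonnegativity of all $2\times 2$ minors of $G$.
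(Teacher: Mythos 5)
Your overall route is the right one, and in fact the only one available: the paper states this corollary without proof, importing it (and the theorem preceding it) from \cite{HEC}, so the natural derivation is exactly what you propose --- specialize condition (2) of the theorem to constant entries. Your generic case is correct and complete: when $G_{ki},G_{kj}>0$, both sides of (\ref{ll}) are linear with positive leading coefficients, the left root is $-\mu/\lambda - G_{lj}/(\lambda G_{kj})$ and the right root is $-\mu/\lambda - G_{li}/(\lambda G_{ki})$, and the root comparison required by $\ll$ reduces, after the $-\mu/\lambda$ terms cancel, to $G_{ki}G_{lj}-G_{kj}G_{li}\geq 0$, independently of $\lambda,\mu$.

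The gap is in the degenerate cases, which you defer with the claim that interlacing and the minor inequality will be ``simultaneously satisfied (or simultaneously vacuous).'' That is false for one family of configurations, and it is precisely the family where your reduction could break. Take $G_{ki}=0$ with $G_{kj},G_{li}>0$, e.g.\ the submatrix $\left(\begin{smallmatrix} 0 & 1 \\ 1 & 1 \end{smallmatrix}\right)$: the minor is $-G_{kj}G_{li}<0$, so for the equivalence you must show that (\ref{ll}) \emph{fails}, not that it holds. Here the left side of (\ref{ll}) is genuinely linear while the right side is the nonzero constant $G_{li}$, and the relation does fail --- but only because $f\ll g$, as defined by the alternating chain of zeros, implicitly forces $\deg f \leq \deg g \leq \deg f +1$, so a degree-one polynomial cannot interleave a nonzero constant. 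Under the reading your ``vacuous'' language suggests (the chain holds trivially whenever one polynomial has no zeros), you would instead conclude that condition (2) holds for this submatrix despite the negative minor, and the ``only if'' direction of your argument would collapse. The fix is to make the degree constraint in the definition of $\ll$ explicit and then run the case analysis: if $G_{kj}=0$ the left side is the constant $G_{lj}$ (or the zero polynomial, handled by $0\ll f$), interlacing holds and the minor is $G_{ki}G_{lj}\geq 0$; if $G_{ki}=0$ and $G_{li}>0$ both the interlacing and the minor inequality fail as above; if $G_{ki}=G_{li}=0$ the right side is $0$, covered by $f\ll 0$, with minor $0$; if $G_{ki}=G_{kj}=0$ both sides are constants, the chain is empty, and the minor is $0$. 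With those checks written out, your reduction is sound and the corollary follows from the theorem.
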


What was discussed in Section 2, in particular Theorem \ref{comp}, can be thought of as matrices $G$ where the entries are $0$ on the main diagonal, $1$ above and $x$ below the main diagonal. Similarly the matrices that were  considered in Theorem 2.3 in \cite{SavageVisontai} are those with 1 on and above the main diagonal and $x$ below. We will generalize the result to other matrices $G$. The goal of this section is to determine for which matrices $G$ with entries in $\{0,1,x\}$ all $2 \times 2$ submatrices of $G$,
\[ \left( \begin{array}{cc}
 G_{ki}&  G_{kj} \\
 G_{li}&  G_{lj}  \end{array} \right), \]
 satisfy (\ref{ll}) and thus are such that $G$ preserves interlacing.
 
 The following theorem tells us what must be avoided.

\begin{lemma}
\label{avoid}
If an $m\times n$ matrix $G=(G_{i,j}(x))$ has a $2 \times 2$ sub-matrix of the form
  \begin{itemize}
  \item[\rmnum{1})] $ \left( \begin{array}{cc}
x & G_{kj}\\
1 & G_{lj} \end{array} \right)$ or 
 $\left( \begin{array}{cc}
G_{ki}& x  \\
 G_{li} &  1  \end{array} \right)$,

\item[\rmnum{2})] $\left( \begin{array}{cc}
1&  x\\
G_{li} & G_{lj}   
 \end{array} \right)$ 
or
 $\left( \begin{array}{cc}
G_{ki}& G_{kj}  \\
 1 &  x  \end{array} \right)$,
 
   \item[\rmnum{3})] $\left( \begin{array}{cc}
1& G_{kj}  \\
  G_{li}&  x  \end{array} \right)$, 
 $\left( \begin{array}{cc}
x& G_{kj}  \\
 G_{li} &  1  \end{array} \right)$ 
or
 $\left( \begin{array}{cc}
G_{ki}& x  \\
 1 &  G_{lj}  \end{array} \right)$

except for 
 $\left( \begin{array}{cc}
1& 1 \\
 x&  x  \end{array} \right)$ and
 $\left( \begin{array}{cc}
x&  1\\
x &  1 \end{array} \right)$,
  \item[\rmnum{4})] all entries are $0$ or $1$ and the determinant is negative, or

 \item[\rmnum{5})] $x$ times a zero-one matrix with negative determinant,

 \end{itemize}
then $G : \mathcal{F}^+_n \nrightarrow \mathcal{F}^+_m $.

 \end{lemma}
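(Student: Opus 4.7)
The plan is to apply the characterization theorem of \cite{HEC} recalled above: $G$ maps $\mathcal{F}^+_n$ into $\mathcal{F}^+_m$ if and only if every $2\times 2$ submatrix of $G$ satisfies relation (\ref{ll}) for every $\lambda,\mu>0$. To prove the lemma, it therefore suffices, for each listed forbidden pattern, to exhibit a single pair of positive reals $(\lambda,\mu)$ for which the interlacing condition (\ref{ll}) fails on that $2\times 2$ submatrix; the same witness then obstructs the whole matrix $G$.

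For case (\rmnum{1}), the fixed column $(x,1)^\top$ forces the relevant side of (\ref{ll}) to be $\lambda x^2+\mu x+1$, whose discriminant $\mu^2-4\lambda$ is negative at $\lambda=\mu=1$; the resulting non-real roots rule out $\ll$. For case (\rmnum{2}), the fixed row $(1,x)$ forces the left-hand side of (\ref{ll}) to be $\lambda x^2+\mu x+G_{lj}$ with top root equal to $0$ whenever $G_{lj}\in\{0,x\}$, while the linear right-hand side $\lambda x+\mu+G_{li}$ always has a strictly negative root; when $G_{lj}=1$, the choice $\lambda=\mu=1$ instead makes the quadratic non-real-rooted. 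In either subcase (\ref{ll}) fails.

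Case (\rmnum{3}) is the most delicate, since both sides of (\ref{ll}) remain real-rooted of degree at most two. For each of the three listed shapes I would enumerate the assignments of the two free entries from $\{0,1,x\}$, compute the roots of both sides at $\lambda=\mu=1$, and verify that the top root of the LHS strictly exceeds the top root of the RHS for every assignment outside the two listed exceptions. The exceptions are genuinely benign: for $\left(\begin{smallmatrix}1&1\\x&x\end{smallmatrix}\right)$ both sides of (\ref{ll}) coincide with $(\lambda+1)x+\mu$, and for $\left(\begin{smallmatrix}x&1\\x&1\end{smallmatrix}\right)$ the right-hand side equals $x$ times the left-hand side, so in both cases the interlacing condition is automatic for every $\lambda,\mu>0$.

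Cases (\rmnum{4}) and (\rmnum{5}) follow quickly. In (\rmnum{4}) the offending $2\times 2$ block is a nonnegative real matrix with negative determinant, so Corollary \ref{posdet} applied to that block yields a pair $(\lambda,\mu)$ violating (\ref{ll}). In (\rmnum{5}) the block equals $x$ times a zero-one matrix $H$ of negative determinant; the common factor of $x$ may be pulled out of both sides of (\ref{ll}) (both sides then share the extra root $0$), and the resulting interlacing condition on the constant-coefficient polynomials built from $H$ is precisely the one already handled in case (\rmnum{4}). The main obstacle will be the bookkeeping of case (\rmnum{3}): many subcases have to be verified individually, and isolating the two exceptional patterns is what makes the statement sharp rather than merely a sufficient condition.
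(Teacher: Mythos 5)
Your overall strategy coincides with the paper's: both rest on the ``only if'' direction of the characterization theorem from \cite{HEC}, so that exhibiting one $2\times 2$ submatrix and one choice of $\lambda,\mu>0$ violating (\ref{ll}) obstructs all of $G$; cases \rmnum{1}), \rmnum{4}) and \rmnum{5}) are handled as in the paper (your explicit factoring of $x$ out of both sides in \rmnum{5}) is a legitimate elaboration of the paper's terse appeal to Corollary \ref{posdet}, since all zeros involved are nonpositive and the extra zero at $0$ sits at the top of both interlacing chains). However, there are two genuine gaps. First, in case \rmnum{2}) you only treat the pattern whose \emph{top} row is $(1\ \ x)$; the second listed pattern $\left(\begin{smallmatrix}G_{ki}&G_{kj}\\ 1&x\end{smallmatrix}\right)$ is never addressed. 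The paper disposes of it by letting $\lambda,\mu\to 0$ in (\ref{ll}), which degenerates to the false relation $x\ll 1$; note also that the paper's limit arguments for \rmnum{2}) (and $\lambda\to\infty$ giving $x^2\ll x$ for the first pattern) work for \emph{arbitrary} wildcard entries, whereas your enumeration is tied to wildcards in $\{0,1,x\}$.

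Second, your proposed certificate for case \rmnum{3}) --- check at $\lambda=\mu=1$ that the top root of the LHS of (\ref{ll}) strictly exceeds the top root of the RHS --- is wrong for four of the seven matrices that must be checked. The seven are $\left(\begin{smallmatrix}1&0\\0&x\end{smallmatrix}\right)$, $\left(\begin{smallmatrix}1&0\\x&x\end{smallmatrix}\right)$, $\left(\begin{smallmatrix}1&1\\0&x\end{smallmatrix}\right)$, $\left(\begin{smallmatrix}x&0\\0&1\end{smallmatrix}\right)$, $\left(\begin{smallmatrix}x&0\\x&1\end{smallmatrix}\right)$, $\left(\begin{smallmatrix}x&1\\0&1\end{smallmatrix}\right)$ and $\left(\begin{smallmatrix}0&x\\1&0\end{smallmatrix}\right)$. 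For three of these one side of (\ref{ll}) is the nonzero constant $1$ against a quadratic --- e.g.\ the paper's own example $1\ll(\lambda x+\mu)x$ --- so the failure is a \emph{degree} obstruction and your root comparison is vacuous. Worse, for $\left(\begin{smallmatrix}x&1\\0&1\end{smallmatrix}\right)$ condition (\ref{ll}) reads $\lambda x+\mu+1\ll \lambda x^2+\mu x$, where the top root $-(\mu+1)/\lambda$ of the LHS lies strictly \emph{below} the top root $0$ of the RHS for every $\lambda,\mu>0$; the violation occurs in the lower inequality of the interlacing chain, since $-(\mu+1)/\lambda<-\mu/\lambda=\beta_2$. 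Your check as stated would therefore wrongly pass this matrix. The fix is simply to verify the full definition of $\ll$ --- degree constraints and all chain inequalities, not just the largest roots --- which is what the paper's ``check them by hand'' implicitly does. (Your verification that the two exceptional matrices satisfy (\ref{ll}) is correct, but it is not needed for this lemma; it belongs to the theorem that follows it.)
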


 \begin{proof}
 A $2 \times 2$ matrix as in \rmnum{1}) must be avoided since for $\lambda = \mu =1$ one side of (\ref{ll}) will not be real-rooted. 
 For ii) we have two cases. If the $(1\,\, x)$ is in the first row and we let $\lambda \rightarrow \infty$ in (\ref{ll}) we get $x^2 \ll x$ which is not true. 
 If the second row is $(1\,\, x)$ and we let $\lambda, \mu \rightarrow 0$ we see that (\ref{ll}) gives $x \ll 1$, another false statement. 
For iii) there are seven matrices to check, that are not included in i) or ii). We check them by hand. For example if $G_{kj}=G_{li}=0, G_{ki}=x $ and $G_{lj}=1$ we get $1 \ll (\lambda x + \mu)x$ from (\ref{ll}), which is not true for all $\lambda$ and $\mu$. 
The last two cases follows from Corollary \ref{posdet}.
 \end{proof}

 \begin{theorem}
 If an $m\times n$ matrix $G=(G_{i,j}(x))$ has no $2 \times 2$ submatrices of the type described in i)-v) in Lemma \ref{avoid} above and if all entries in $G$ are $0,1$ or $x$, then $G : \mathcal{F}^+_n \rightarrow \mathcal{F}^+_m $.
 \end{theorem}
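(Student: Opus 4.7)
The plan is to apply the \cite{HEC} theorem recorded at the beginning of Section~\ref{gene}: condition (1) of that theorem (nonnegative coefficients of the $G_{ij}$) is automatic since entries lie in $\{0,1,x\}$, so the whole argument reduces to verifying, for every $2\times 2$ submatrix
\[ M = \begin{pmatrix} a & b \\ c & d \end{pmatrix}, \qquad a,b,c,d\in\{0,1,x\}, \]
that avoids all patterns i)--v) of Lemma~\ref{avoid}, the interlacing
\[ (\lambda x+\mu)b + d \,\ll\, (\lambda x+\mu)a + c \qquad \text{for all } \lambda,\mu>0. \]
This is a finite case analysis on matrices over $\{0,1,x\}$.

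First I would dispatch the two extremes. If every entry of $M$ lies in $\{0,1\}$, then avoidance of type iv) gives $\det M \geq 0$ and Corollary~\ref{posdet} applies directly. If every entry lies in $\{0,x\}$, write $M = xM'$ with $M'$ a $\{0,1\}$-matrix; avoidance of type v) gives $\det M' \geq 0$, so Corollary~\ref{posdet} applies to $M'$. The interlacing transfers from $M'$ to $M$ via the elementary fact that $xf \ll xg$ whenever $f \ll g$ and both $f,g$ have nonnegative coefficients, because in that case all real zeros of $f,g$ are nonpositive and inserting the common zero $0$ preserves the interleaving pattern.

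For the remaining \emph{mixed} case, in which $M$ contains both a $1$ and an $x$, I would enumerate by the column pairs $(a,c)$ and $(b,d)$. Each polynomial $(\lambda x+\mu)a + c$ with $a,c\in\{0,1,x\}$ is one of: $0$, a nonzero constant, a linear polynomial in $x$, $x(\lambda x+\mu)$, $x(\lambda x+\mu+1)$, or $\lambda x^2+\mu x+1$. Only the last fails to be real-rooted for some $\lambda,\mu>0$, and it occurs precisely when $(a,c)=(x,1)$---which is exactly the content of type i). Once this is excluded, each side of the interlacing factors into explicit linear pieces with nonpositive real roots; I would read off the roots in each remaining configuration and check the interleaving directly. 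The forbidden patterns of types ii) and iii) turn out to be exactly those configurations in which the two lists of roots fail to interleave (either a mismatch of degrees, as in type ii) and the first matrix of type iii), or a misaligned pair of roots). The two exceptional matrices $\begin{pmatrix} 1 & 1 \\ x & x \end{pmatrix}$ and $\begin{pmatrix} x & 1 \\ x & 1 \end{pmatrix}$ in type iii) are the degenerate rank-one cases where the two columns, respectively the two rows, are proportional, so the two sides of the interlacing become proportional polynomials or one becomes $x$ times the other, and the interlacing is then immediate.

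The main obstacle is the sheer bookkeeping of a few dozen subcases rather than any conceptual difficulty: each reduces to an elementary inequality between simple expressions in $\lambda,\mu>0$ (e.g.\ comparing $-\mu/\lambda$ with $-(\mu+1)/(\lambda+1)$). The content of the theorem is precisely that types i)--v) form a \emph{complete} obstruction list for preservation of interlacing among matrices with entries in $\{0,1,x\}$, dovetailing with the necessity already established in Lemma~\ref{avoid}.
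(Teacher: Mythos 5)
Your argument is correct, but it follows a genuinely different route from the paper's. Both proofs begin with the same reduction: by the characterization theorem from \cite{HEC} quoted at the start of Section~\ref{gene}, condition 1 is automatic for $\{0,1,x\}$-entries, so everything comes down to verifying \eqref{ll} for each $2\times 2$ submatrix avoiding i)--v). From there the paper does \emph{not} check the surviving submatrices case by case. Instead it verifies \eqref{ll} by hand for seven generator matrices, invokes the fact that satisfying \eqref{ll} (equivalently, preserving interlacing) is closed under matrix multiplication, and reports a computer calculation showing that products of the seven generators yield exactly $40$ matrices with entries in $\{0,1,x\}$; since Lemma~\ref{avoid} forbids $41$ matrices and $40+41=81=3^4$, any submatrix avoiding the forbidden list must be one of the $40$ good ones. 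Your proof replaces this generation-plus-counting step with a stratified exhaustive check: pure $\{0,1\}$ and pure $\{0,x\}$ submatrices via Corollary~\ref{posdet} (your lifting step, that $f\ll g$ implies $xf\ll xg$ when all roots are nonpositive, is correct and needed for the $\{0,x\}$ case), and mixed submatrices by observing that only the column pattern $\left(\begin{smallmatrix} x\\ 1\end{smallmatrix}\right)$ of type i) produces the non-real-rooted side $\lambda x^2+\mu x+1$, after which both sides factor into linear pieces with nonpositive roots that can be compared directly. What your route buys is self-containedness: no computer enumeration and no appeal to closure under composition, plus an independent cross-check of the paper's bookkeeping (your good pure matrices number $13+13-1=25$ and your good mixed ones $15$, recovering the paper's $40$). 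What the paper's route buys is economy: only seven base verifications of \eqref{ll}, with the closure property absorbing the rest. The only caveat is that your treatment of the mixed case is a plan rather than a completed verification --- the roughly fifteen surviving configurations still need their root lists written out and the elementary inequalities (of the sort comparing $-\mu/\lambda$ with $-(\mu+1)/(\lambda+1)$) confirmed --- but this is a routine finite computation, not a gap in the argument; your identification of the two exceptional matrices in type iii) as the rank-one cases where the two sides of \eqref{ll} coincide or differ by a factor of $x$ is also correct.
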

 
 \begin{proof}
 One readily checks (\ref{ll}) for the seven matrices, 
 $$
   \left( \begin{array}{cc}
1& 0  \\
  0&  1  \end{array} \right),
 \left( \begin{array}{cc}
1& 0  \\
 x &  1  \end{array} \right),
 \left( \begin{array}{cc}
1& 1 \\
 0&  1  \end{array} \right),
 \left( \begin{array}{cc}
1&  1\\
x &  1 \end{array} \right),
 \left( \begin{array}{cc}
1& 0  \\
  1&  1  \end{array} \right),
 \left( \begin{array}{cc}
0& 0  \\
 1 &  0  \end{array} \right), 
 \left( \begin{array}{cc}
0& 1 \\
 x&  0  \end{array} \right).
  $$
  For example the second matrix corresponds to
  $$1 \ll \lambda x + \mu + x,$$
  which is true for all $\lambda,\mu\ \geq 0$.
  
By multiplying the matrices above, simple computer calculations show that we end up with 40 matrices with entries in $\{0,1,x\}$. They all satisfy (\ref{ll}) since if two matrices $A$ and $B$ satisfy (\ref{ll}) the same is true for $AB$ and $BA,$ since the property of preserving interlacing is closed under composition. Lemma \ref{avoid} counts for 41 $2 \times 2$ submatrices and thus they are the only $2 \times 2$ submatrices that needs to be avoided.
 \end{proof}

Note that the seven matrices presented in the proof above are generating all $2\times 2$ sub-matrices with entries in $\{0,1,x\}$ that satisfies (\ref{ll}). Thus we can use them to check if any $m \times n$ matrix $G=(G_{i,j}(x))$  is such that $     G : \mathcal{F}^+_n \rightarrow \mathcal{F}^+_m $.

 \begin{corollary}
\label{ferr}
Let $G=(G_{i,j}(x))$ be an $m\times n$ matrix with all entries in $\{0,1,x\}$ such that: 
\begin{itemize} 
\item[1)] If $G(i,j)=1$, then $G(k,l)=1$ whenever $1 \leq k \leq i$ and $j\leq l \leq n$. That is, all entries that are equal to 1 in $G$ form a Ferrers board in the top right corner of $G$. 
\item[2)]  If $G(i,j)=x$, then $G(k,l)=x$ whenever $i \leq k \leq m$ and $1\leq l \leq j$. That is, all entries that are equal to $x$ in $G$ form a Ferrers board in the bottom left corner of $G$.   
\end{itemize}
Then $G : \mathcal{F}^+_n \rightarrow \mathcal{F}^+_m $.
\end{corollary}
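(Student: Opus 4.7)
The plan is to apply the theorem immediately preceding the corollary, which asserts that an $m\times n$ matrix $G$ with entries in $\{0,1,x\}$ satisfies $G:\mathcal{F}^+_n \to \mathcal{F}^+_m$ provided no $2\times 2$ submatrix of $G$ belongs to any of the forbidden families i)--v) of Lemma~\ref{avoid}. The task thus reduces to a purely combinatorial verification: under the two Ferrers hypotheses, no $2\times 2$ submatrix of $G$ can be forbidden.

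The first step is to extract six short propagation rules from the hypotheses. Fix rows $k<l$ and columns $i<j$ and write the generic $2\times 2$ submatrix as $\begin{pmatrix} A & B \\ C & D \end{pmatrix} = \begin{pmatrix} G_{ki} & G_{kj} \\ G_{li} & G_{lj} \end{pmatrix}$. Hypothesis 1) immediately gives $A=1 \Rightarrow B=1$, $D=1 \Rightarrow B=1$, and $C=1 \Rightarrow A=B=D=1$; hypothesis 2) gives symmetrically $A=x \Rightarrow C=x$, $D=x \Rightarrow C=x$, and $B=x \Rightarrow A=C=D=x$. Each is an instantaneous specialization of the defining Ferrers-board condition.

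The second step is to check the forbidden families against these rules. The four patterns in i) and ii) each demand a single combination ruled out outright: i) requires $(A,C)=(x,1)$ or $(B,D)=(x,1)$, and ii) requires $(A,B)=(1,x)$ or $(C,D)=(1,x)$, each immediately contradicting one propagation rule. For the three sub-patterns in iii), the combinations $(A,D)=(1,x)$ and $(A,D)=(x,1)$ are forced by the rules into the two permitted exceptions $\begin{pmatrix}1&1\\x&x\end{pmatrix}$ and $\begin{pmatrix}x&1\\x&1\end{pmatrix}$, while the remaining sub-pattern $(B,C)=(x,1)$ is immediately inconsistent. Finally, iv) and v) require a negative determinant in a $\{0,1\}$- or a $\{0,x\}$-submatrix, which forces $B=C$ to take the nonzero value; the propagation rules then force all four entries to coincide, so the determinant is zero, not negative.

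The only step requiring any care is case iii), since the Ferrers setup does legitimately produce the two exceptional matrices and one must verify that the completions forced by the rules are exactly these two matrices rather than any strictly forbidden configuration. Beyond that, the argument is a routine finite case analysis driven entirely by the six propagation rules, so I anticipate no conceptual obstacle.
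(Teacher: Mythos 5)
Your proposal is correct and takes essentially the same route as the paper: both reduce the claim to the preceding theorem by verifying that the Ferrers hypotheses exclude every forbidden $2\times 2$ pattern of Lemma~\ref{avoid}. The only difference is organizational---the paper exhaustively lists the twenty $2\times 2$ submatrices that can actually occur and checks none is of type i)--v) (or that each satisfies (\ref{ll}) directly), whereas you argue contrapositively via your six propagation rules, correctly confirming that the diagonal patterns of case iii) complete only to the two permitted exceptional matrices.
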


\begin{proof}
The possible $2 \times 2$ sub-matrices of $G$ are
\[ \left( \begin{array}{cc}
 1&  1 \\
 1&  1  \end{array} \right), 
\left( \begin{array}{cc}
1 &1   \\
0 & 1   \end{array} \right),
\left( \begin{array}{cc}
 1& 1  \\
x &  1  \end{array} \right),
 \left( \begin{array}{cc}
1 &  1 \\
x &  x  \end{array} \right),
 \left( \begin{array}{cc}
1 &  1 \\
x &  0  \end{array} \right),
 \left( \begin{array}{cc}
1 & 1  \\
0 &  0  \end{array} \right),
\left( \begin{array}{cc}
x & 1  \\
x &  1  \end{array} \right),
\]  

\[
 \left( \begin{array}{cc}
0 &  1 \\
x &  1  \end{array} \right),
 \left( \begin{array}{cc}
0 &  1 \\
0 &  1  \end{array} \right)
\left( \begin{array}{cc}
 x& 1  \\
x &  x  \end{array} \right),
 \left( \begin{array}{cc}
 0&  1 \\
 x& x   \end{array} \right),
 \left( \begin{array}{cc}
x &  1 \\
x &   0 \end{array} \right),
  \left( \begin{array}{cc}
 0&  1 \\
 x&   0 \end{array} \right),
 \left( \begin{array}{cc}
 0&  1 \\
 0&  0  \end{array} \right),
  \]
 
 \[
  \left( \begin{array}{cc}
0 &  0 \\
0 &   0 \end{array} \right),
 \left( \begin{array}{cc}
0 & 0  \\
x &  0  \end{array} \right),
 \left( \begin{array}{cc}
0 & 0  \\
x &   x \end{array} \right),
 \left( \begin{array}{cc}
 x&  0 \\
 x&  0  \end{array} \right)
,
\left( \begin{array}{cc}
x &   0\\
x &   x \end{array} \right)
\, \text{and} \,
\left( \begin{array}{cc}
x &  x \\
x &  x  \end{array} \right).
\]
The corollary can be proved either by noting that none of the matrices, $A$, above are of the form \rmnum{1})-\rmnum{5}) in Lemma \ref{avoid} or by noting that they all satisfy
$$
(\lambda x+ \mu ) A_{1,2} + A_{2,2} \ll  (\lambda x+ \mu ) A_{1,1} + A_{2,1},
$$
for all $\lambda,\mu > 0.$
\end{proof}
Examples of $\{0,1,x\}$-matrices satisfying 1) and 2) in Corollary \ref{ferr} are
 $$
   \left( \begin{array}{cccc}
1& 1&1&1  \\
0&1&1&1 \\
x& 1&1&1\\
x&x&x&x  \end{array} \right)
\text{ and }
   \left( \begin{array}{cccc}
0& 0&0&1  \\
x&0&0&0 \\
x& x&0&0\\
x&x&x&x  \end{array} \right).
$$
 
As an application of Corollary \ref{ferr} we now prove that generating polynomials of the ascent statistic over generalized Smirnov words are real-rooted. 
Let $\gamma=(\gamma_0, \ldots , \gamma_{r-1})$ be an integer sequence with $0 \leq \gamma_i \leq r-2$ for all $0\leq i \leq r-1$ such that $|\gamma_{i+1}-\gamma_i| \leq 1$ for all $0\leq i \leq r-2$. 
Let $\mathcal{SW}_{\gamma}(n,r)$ be the set of words $(w_0,w_1,\ldots , w_n) \in \{0,1, \dots r-1\}^{n+1}$ such that if $w_j =i$ then  
$|w_j -  w_{j-1}| > \gamma_i$ for $1\leq j \leq n$ and $w_0=w_n=0$. 
\begin{corollary} With $n,r$ and $\gamma$ as above, the polynomial
$$\sum_{w\in \mathcal{SW}_{\gamma}(n,r)}  x^{\asc(w)}$$
is real-rooted.
\end{corollary}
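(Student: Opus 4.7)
The plan is to mimic the strategy of Section~\ref{quest} by introducing a refinement of the polynomial, setting up a recurrence in $n$, and recognizing the recurrence as the action of a matrix $G$ to which Corollary~\ref{ferr} applies. Concretely, for $0\le i \le r-1$ I would set
$$E^i_{\gamma,r,n}(x) \;=\; \sum_{w \in \mathcal{SW}'_\gamma(n,r)} \chi(w_n = i)\, x^{\asc(w)},$$
where $\mathcal{SW}'_\gamma(n,r)$ consists of words satisfying the generalized Smirnov condition and $w_0 = 0$, but \emph{without} the constraint $w_n = 0$; so the polynomial named in the corollary is $E^0_{\gamma,r,n}$. Conditioning on $h = w_{n-1}$ and using that the admissibility requirement at position $n$ reads $|i-h| > \gamma_i$, I would obtain
$$E^i_{\gamma,r,n} \;=\; \sum_{h < i - \gamma_i} x\, E^h_{\gamma,r,n-1} \;+\; \sum_{h > i + \gamma_i} E^h_{\gamma,r,n-1},$$
the factor $x$ entering because removing the last letter kills an ascent exactly when $h<i$.

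Next I would encode this recurrence as the linear action of an $r \times r$ matrix $G = (G_{ij})$ defined by $G_{ij} = x$ for $j \le i - \gamma_i - 1$, $G_{ij} = 1$ for $j \ge i + \gamma_i + 1$, and $G_{ij} = 0$ otherwise. The main obstacle, as I see it, is the geometric verification that the $1$-entries of $G$ form a Ferrers shape in the top-right corner and the $x$-entries form a Ferrers shape in the bottom-left corner, as required by Corollary~\ref{ferr}. This is precisely where the Lipschitz hypothesis $|\gamma_{i+1}-\gamma_i| \le 1$ enters: iterating it gives $|\gamma_k - \gamma_i| \le |k-i|$, and then, assuming $G_{ij} = 1$ (i.e.\ $j - i \ge \gamma_i + 1$), one estimates, for any $k \le i$ and $l \ge j$,
$$l - k \;\ge\; (i-k) + \gamma_i + 1 \;\ge\; \gamma_k + 1,$$
using $\gamma_k - \gamma_i \le i-k$; hence $G_{kl} = 1$. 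A symmetric estimate handles the $x$-entries.

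With the Ferrers conditions in place, Corollary~\ref{ferr} yields $G : \mathcal{F}^+_r \to \mathcal{F}^+_r$, and a short induction on $n$ closes the proof. For the base case $n = 1$, each polynomial $E^i_{\gamma,r,1}$ lies in $\{0, x\}$ and the sequence is trivially in $\mathcal{F}^+_r$; the inductive step pushes the interlacing sequence forward under $G$. Each $E^i_{\gamma,r,n}$ is therefore real-rooted, and in the case $i = 0$ this is the desired statement.
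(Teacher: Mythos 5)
Your proposal is correct and follows essentially the same route as the paper's own proof: the same refinement $E^i_{\gamma,r,n}$, the same recurrence obtained by conditioning on $w_{n-1}$, the same transfer matrix with entries $0$, $1$, $x$ determined by the sign of $|i-j|-\gamma_i$, and the same appeal to Corollary~\ref{ferr}. If anything you give more detail than the paper, which asserts the Ferrers conditions with a ``clearly'': your Lipschitz estimate $\gamma_k - \gamma_i \le i - k$ yielding $l - k \ge (i-k) + \gamma_i + 1 \ge \gamma_k + 1$ is precisely the omitted verification, and your base case $E^i_{\gamma,r,1} \in \{0,x\}$ correctly anchors the induction.
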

\begin{proof}
Let $\mathcal{SW'}_{\gamma}(n,r)$ be defined as $\mathcal{SW}_{\gamma}(n,r)$ without the restriction $w_n=0$. 
Let now
$$
E_{\gamma,r,n}^i = \sum_{w \in \mathcal{SW'}_{\gamma}(n,r)}  \chi (w_n=i ) x^{\asc(w)}.
$$
With the same reasoning as for the proof of Lemma 2.1 we may write
$$(E_{\gamma,r,n}^0, \ldots , E_{\gamma,r,n}^{r-1}) =A \cdot (E_{\gamma,r,n-1}^0, \ldots , E_{\gamma,r,n-1}^{r-1 })^t,$$
where $A=(a_{ij})_{i,j=0}^{r-1}$ is an $r \times r$ matrix with
$$
a_{ij}=
\begin{cases}
0 \text{ if } |i-j|  \leq \gamma_{i},\\
1 \text{ if } j -i > \gamma_{i}, \\
x \text{ if } i-j > \gamma_{i}.
\end{cases}
$$
Clearly $A$ satisfies 1) and 2) in Corollary \ref{ferr}. Thus $E_{\gamma,r,n}^i$ is real-rooted for all $0\leq i \leq r-1$.
\end{proof}

It would be interesting to further characterize classes of matrices for which all $2 \times 2$ submatrices satisfy (\ref{ll}). In particular, can one characterize the class of matrices generated by the $\{0,1,x\}$-matrices satisfying (\ref{ll})? Do we obtain all integer interlacing preserving matrices, or a proper subclass? 
\section*{Acknowledgements}
I want to thank Petter Br\"and\'en for fruitful discussions and valuable comments.

\end{document}